\definecolor{myblue}{rgb}{0,0,0.5}
\definecolor{mygreen}{rgb}{0,0.5,0}
\definecolor{myred}{rgb}{0.5,0,0}
\newcommand{\nn}{\nonumber}
\newcommand{\RNum}[1]{\uppercase\expandafter{\romannumeral #1\relax}}
\def \[{\begin{equation}}
\def \]{\end{equation}}
\newcommand{\R}{\mathbf{R}}
\newtheorem{theorem}{Theorem}[section]
\newtheorem{lemma}{Lemma}[section]
\newtheorem{remark}{Remark}
\newtheorem{proposition}{Proposition}[section]
\gdef\cdefinition{定义\,}
\gdef\clemma{引理\,}
\gdef\ctheorem{定理\,}
\begin{document}
\begin{CJK*}{GBK}{song}

\begin{center}

{\Large \bf A dual-primal balanced augmented Lagrangian method for linearly constrained convex programming}\\

\bigskip

 {\bf Shengjie Xu}\footnote{\parbox[t]{16cm}{
 Department of Mathematics, Harbin Institute of Technology, Harbin, China, and Department of Mathematics,   Southern  University of Science and Technology, Shenzhen, China.  Email: xsjnsu@163.com
  }}

\medskip

\today

\end{center}

\medskip

{\small

\parbox{0.95\hsize}{

\hrule

\medskip

{\bf Abstract.} Most recently, He and Yuan [arXiv:2108.08554, 2021] have proposed a balanced augmented Lagrangian method (ALM) for the canonical convex programming problem with linear constraints, which advances the original ALM by balancing its subproblems and improving its implementation. In this short note, we propose a dual-primal version of the  balanced ALM, which updates the new iterate via a conversely dual-primal iterative order formally. The proposed method inherits all advantages of the prototype balanced ALM, and its convergence analysis can be well conducted in the context of variational inequalities. In addition, its numerical efficiency is demonstrated by the basis pursuit problem.

\medskip

\noindent {\bf Keywords}:  augmented Lagrangian method, convex programming, dual-primal, proximal point algorithm, variational inequality

 \medskip

  \hrule

  }}

\bigskip

\section{Introduction}

A basic optimization model is the classic convex programming problem with linear equality constraints:
\begin{equation}\label{problem}
  \min\big\{\theta(x)   \mid  Ax=b,  \;  x\in\mathcal{X} \big\},
\end{equation}
where $\theta:\Re^{n}\to {\Re}$ is a proper convex but not necessarily smooth function, $\mathcal{X}\subseteq\Re^n$ is a closed convex set,  $A\in\Re^{m\times n}$ and $b\in\Re^m$. Among algorithms for solving   \eqref{problem}, the augmented Lagrangian method (ALM) proposed in \cite{Hes69,Powell69} turns out to be fundamental, and it plays a significant role in both theoretical study and algorithmic design for various convex programming problems. We refer to, e.g., \cite{Bertsekas1982,birgin2014practical,FG1983,glowinski1989augmented,ito2008lagrange} for the vast volume of related literature. In particular, it was shown in \cite{Rock76,Rock76B} that the ALM can be interpreted as an application of the proximal point algorithm (PPA) introduced in \cite{Mar70}.   In practice, with given $(x^k,\lambda^k)$, the original ALM generates the new iterate $(x^{k+1},\lambda^{k+1})$ via
\begin{equation}\label{ALM}
  \left\{
    \begin{array}{rll}
      x^{k+1} &=& \arg\min\big\{ \mathcal{L}_{\beta}(x, \lambda^k)   \mid  x\in\mathcal{X}\big\},\\[0.2cm]
      \lambda^{k+1} &=& \lambda^k-\beta(Ax^{k+1}-b),
    \end{array}
  \right.
\end{equation}
where  $\beta>0$ is the penalty parameter for the linear constraints, $\lambda\in\Re^m$ is the Lagrangian multiplier and
$$ \mathcal{L}_{\beta}(x, \lambda):=\theta(x)-\lambda^T(Ax-b) + \frac{\beta}{2}\|Ax-b\|_2^2$$
is the corresponding augmented  Lagrangian function of \eqref{problem}. Throughout our discussion, the parameter $\beta$ is assumed to be fixed for simplification, and hereafter, we also call $x$ and $\lambda$ the primal and dual variables, respectively.

Ignoring some constant terms, it is trivial to see that the essential step for implementing the original ALM \eqref{ALM} equals to the minimization problem
\begin{equation}\label{xcore}
  x^{k+1}=\arg\min\bigg\{\theta(x)+\frac{\beta}{2}\big\|Ax-(b+\frac{1}{\beta}\lambda^k)\big\|_2^2   \;\;\big|\;\; x\in\mathcal{X}\bigg\}.
\end{equation}
Obviously, the solution set of \eqref{xcore} is essentially determined by the objective function $\theta$, the matrix $A$ and the domain $\mathcal{X}$ in \eqref{problem}.  To improve the implementation of \eqref{ALM}, the so-named linearized ALM has attracted a wide of attention in the literature (see, e.g., \cite{HMY2020,HeYuan2021,YY2013}). Moreover, as discussed in \cite{HeYuan2021}, the linearized ALM for \eqref{problem} can be stated as
 \begin{subequations} \label{LALM}
\begin{numcases}{}
\label{LA-X}  x^{k+1} =\arg\min\bigg\{ \theta(x)+\frac{r}{2}\big\|x-[x^k+\frac{1}{r}A^T(\lambda^k-\beta(Ax^k-b))]\big\|_2^2   \;\;\big|\;\;  x\in\mathcal{X}\bigg\},\\[0.2cm]
 \lambda^{k+1} = \lambda^k-\beta(Ax^{k+1}-b),
\end{numcases}
\end{subequations}
where the parameters $r>0$ and $\beta>0$ need to satisfy the condition $r>\beta\rho(A^TA)$ to theoretically ensure the convergence of \eqref{LALM}. Here,  $\rho(\cdot)$ is the spectrum radius of a matrix.  Note that the matrix $A$ is decoupled in \eqref{LA-X}. The reshaped subproblem \eqref{LA-X} is thus easier to implement than \eqref{xcore}. In particular, it reduces to the proximity operator of $\theta$ when $\mathcal{X}=\Re^n$, which generally has a closed-form solution for some special cases (e.g., $\theta$ is a quadratic or norm function).  We  refer to, e.g.,  \cite{Candes,Chen,Parikh,YY2013} for these particular application scenarios  arising  in data science communities.

There is a structural restriction $r>\beta\rho(A^TA)$ in the well-reshaped linearized ALM \eqref{LALM}.  For a fixed $\beta>0$, it is clear that the quadratic term in \eqref{LA-X} will dominate the objective function of  \eqref{LA-X} if $\rho(A^TA)$ is two large, which would result in a tiny step size and thus limit the numerical efficiency of \eqref{LALM}. To reduce such a restriction, it was proved in \cite{HMY2020} that this restriction can be decreased to $r>0.75\beta\rho(A^TA)$ ($0.75$ is also the optimal bound) by using an indefinite proximal regularization technique, which allows a bigger step size and thus potentially accelerates the convergence. Most recently,  a balanced ALM has been presented in \cite{HeYuan2021}, which has no such a restriction and takes the following iterative scheme:
\begin{subequations} \label{PDALM}
\begin{numcases}{}
\label{PDALM-X} x^{k+1} = \arg\min\bigg\{ \theta(x) + \frac{\beta}{2}\big\|x-(x^k+\frac{1}{\beta}A^T\lambda^k)\big\|_2^2   \;\;\big|\;\;  x\in\mathcal{X}\bigg\}, \\[0.1cm]
\label{PDALM-Y} \lambda^{k+1} = \lambda^k-(\frac{1}{\beta}AA^T+\delta I_m)^{-1}\big[A(2x^{k+1}-x^k)-b\big],
\end{numcases}
\end{subequations}
where $\beta>0$ and $\delta>0$ are free parameters. Moreover, as discussed in \cite{HeYuan2021}, the parameter $\delta>0$ is merely used to ensure the positive definiteness the induced matrix theoretically and it can be just fixed as a small value beforehand. It only needs to empirically and technically tune the parameter $\beta$ when implementing \eqref{PDALM}. Clearly, compared with the original ALM \eqref{ALM}, the balanced ALM \eqref{PDALM} enjoys great advantages in mainly two fields: first, the primal subproblem \eqref{PDALM-X} is easier to implement; second, there is no additional limitation on $\rho(A^TA)$. At the same time, we need to note that the dual subproblem \eqref{PDALM-Y} becomes slightly complicated, because the inverse of $\frac{1}{\beta}AA^T+\delta I_m$ is required beforehand. Fortunately,  it can be found easily, e.g., by the Cholesky decomposition.

The primary purpose of this note is to present a dual-primal version of the balanced ALM \eqref{PDALM} for the linearly constrained convex programming problem \eqref{problem}. More concretely,  our new method takes the following iterative scheme:
\begin{equation}\label{DPALM}
  \left\{
    \begin{array}{lcl}
 \bar{\lambda}^k & = & \lambda^k-(\frac{1}{\beta}AA^T+\delta I_m)^{-1}(Ax^k-b), \\[0.1cm]
 \bar{x}^k & = & \arg\min\Big\{ \theta(x) + \frac{\beta}{2}\big\|x-[x^k+\frac{1}{\beta}A^T(2\bar{\lambda}^k-\lambda^k)]\big\|_2^2   \;\;\big|\;\;  x\in\mathcal{X}\Big\}, \\[0.3cm]
 x^{k+1} & = & x^k + \alpha(\bar{x}^k-x^k), \\[0.2cm]
 \lambda^{k+1} & = & \lambda^k + \alpha(\bar{\lambda}^k-\lambda^k),
    \end{array}
  \right.
\end{equation}
where $\beta>0$ and $\delta>0$ are free parameters, and $\alpha\in(0,2)$ is the extrapolation parameter. As can be seen easily, the proposed method \eqref{DPALM} generates first the dual variable $\lambda$, then the primal variable  $x$, and it maintains the same computational difficulty with the prototype balanced ALM \eqref{PDALM}.  It is thus named the dual-primal balanced ALM in this short note. Also, the new introduced method \eqref{DPALM} can be easily extended to tackle the more general separable convex programming problem with both linear equality and inequality constraints.  We will  present a generalized dual-primal balanced ALM for more general convex programming models in Section \ref{sec4}.

The rest of this note is organized as follows. In Section \ref{sec2}, we summarize some fundamental results for streamlining our analysis. In Section 3, we show the global convergence of the dual-primal balanced ALM \eqref{DPALM}, along with a worst-case $\mathcal{O}(1/N)$ convergence rate. Moreover, we present a generalized scheme for more general convex programming models in Section \ref{sec4}. The numerical experiment is further conducted in Section \ref{sec5}, which is used to illustrate the efficiency of the proposed method. Finally,  some conclusions are made in Section \ref{sec6}.

\section{Preliminaries}\label{sec2}
\setcounter{equation}{0}
\setcounter{remark}{0}
In this section, we summarize some preliminaries for further analysis. Let us recall first a primary lemma whose proof is elementary and can be found in, e.g., \cite{Beck}.
\begin{lemma} \label{CP-TF}
\begin{subequations} \label{CP-TF0}
Let $f:\Re^{l}\to {\Re}$ and $g:\Re^{l}\to {\Re}$ be convex functions, and ${\cal Z}\subseteq \Re^l$ be a closed convex set. If $g$ is differentiable on an open set containing ${\cal Z}$ and the solution set of the minimization problem
$\min\{f(z) + g(z) \mid z\in {\cal Z}\}$ is nonempty, then we have
\begin{equation}\label{CP-TF1}
  z^*  \in \arg\min \big\{ f(z) + g(z)  \mid   z\in {\cal Z}\big\}
\end{equation}
if and only if
\begin{equation}\label{CP-TF2}
  z^*\in {\cal Z}, \quad   f(z) - f(z^*) + (z-z^*)^T\nabla g(z^*) \ge 0, \quad  \forall \;  z \in {\cal Z}.
\end{equation}
\end{subequations}
\end{lemma}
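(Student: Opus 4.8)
The plan is to establish the stated equivalence directly from the first-order optimality conditions of convex minimization, proving the two implications separately. The structural feature I would exploit is that $f+g$ is convex on the convex set $\mathcal{Z}$, so that optimality is a local-to-global property; the smooth summand $g$ supplies a genuine gradient, whereas the possibly nonsmooth summand $f$ is handled purely through convexity inequalities rather than through differentiation.

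For the necessity direction \eqref{CP-TF1}$\Rightarrow$\eqref{CP-TF2}, I would fix an arbitrary $z\in\mathcal{Z}$ and use the convexity of $\mathcal{Z}$ to observe that the segment $z_t:=z^*+t(z-z^*)$ lies in $\mathcal{Z}$ for every $t\in(0,1]$. Since $z^*$ is a minimizer, $f(z^*)+g(z^*)\le f(z_t)+g(z_t)$. Bounding the nonsmooth part by convexity via $f(z_t)\le (1-t)f(z^*)+t\,f(z)$ and rearranging yields
\[
 f(z^*)-f(z)\;\le\;\frac{g(z_t)-g(z^*)}{t}.
\]
Letting $t\to 0^+$, the right-hand side converges to the directional derivative $(z-z^*)^T\nabla g(z^*)$ by the differentiability of $g$ on an open set containing $\mathcal{Z}$, which is precisely the variational inequality \eqref{CP-TF2}. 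For the sufficiency direction \eqref{CP-TF2}$\Rightarrow$\eqref{CP-TF1}, I would invoke the gradient inequality for the convex differentiable function $g$, namely $g(z)\ge g(z^*)+(z-z^*)^T\nabla g(z^*)$ for all $z\in\mathcal{Z}$, add $f(z)$ to both sides, and then use \eqref{CP-TF2} to absorb the linear term into $f(z^*)$, giving $f(z)+g(z)\ge f(z^*)+g(z^*)$ for every $z\in\mathcal{Z}$; hence $z^*$ solves \eqref{CP-TF1}. The nonemptiness assumption merely guarantees that the object $z^*$ under discussion exists.

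The only genuine subtlety, and the step I would treat most carefully, lies in the necessity part: because $f$ need not be differentiable, I cannot differentiate $f+g$ as a whole, so the convexity bound on $f$ is essential to isolate the smooth term $g$ before passing to the limit $t\to 0^+$. Everything else is a routine manipulation of convexity inequalities and first-order conditions.
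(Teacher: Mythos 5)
Your proof is correct. The paper itself does not prove this lemma---it only remarks that the proof is elementary and cites Beck's \emph{First-Order Methods in Optimization}---and your two-direction argument (convexity of $f$ along the segment $z_t=z^*+t(z-z^*)$ to isolate the differentiable term before letting $t\to 0^+$ for necessity, and the gradient inequality for $g$ combined with \eqref{CP-TF2} for sufficiency) is exactly the standard argument found in that reference.
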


\subsection{Variational inequality reformulation of (\ref{problem})}\label{sec2.1}

Following the analogous technique in, e.g., \cite{GHY2014,HeYuanSIAMIS,HeYuan-SIAM-N,HeYuan2021}, our analysis will be conducted in the variational inequality (VI) context.
Let us first write the VI reformulation for the optimal condition of the studied model \eqref{problem}.

Let $\Omega:=\mathcal{X}\times\Re^m$  and the Lagrangian function of \eqref{problem} be defined as
\begin{equation}\label{lag}
  L(x,\lambda)  = \theta(x)-\lambda^T(Ax-b),
\end{equation}
where $\lambda\in\Re^m$ is the associated  Lagrangian multiplier. The pair $(x^*,\lambda^*)\in \Omega$ is called a saddle point of  \eqref{lag} if it satisfies
\begin{equation}\label{VI-Chara}
  L_{\lambda\in\Re^m}(x^\ast,\lambda)\leq L(x^\ast,\lambda^\ast) \leq L_{x\in\mathcal{X}}(x,\lambda^\ast).
\end{equation}
That is,
$$
\left\{ \begin{array}{l}
     x^*  \in \hbox{argmin} \{  L(x,\lambda^*) \mid  x\in {\cal X} \},   \\[0.2cm]
     \lambda^*  \in \hbox{argmax} \{L(x^*,\lambda)   \mid  \lambda\in \Re^m\}.
\end{array} \right.
$$
Then, according to Lemma \ref{CP-TF},  the above  inequalities can be alternatively rewritten as
$$\left\{ \begin{array}{lrl}
   x^*\in {\cal X}, &   \theta(x) -  \theta(x^*) + (x-x^*)^T(- A^T\lambda^*) \ge 0, & \forall  \; x\in {\cal X}, \\[0.2cm]
   \lambda^*\in \Re^m,   &      (\lambda-\lambda^*)^T(Ax^*-b)\ge 0,  &  \forall \;   \lambda\in \Re^m,
\end{array} \right.$$
or more compactly,
\begin{subequations}\label{VI}
\begin{equation}\label{VI-S}
  \hbox{VI}(F,\theta,\Omega): \quad w^*\in \Omega, \quad \theta(x) -\theta(x^*) + (w-w^*)^T F(w^*) \ge 0, \quad \forall \;  w\in\Omega,
\end{equation}
by setting
\begin{equation}\label{Notation-uFO}
  w = \left(\!\!\begin{array}{c}
                   x\\
                   \lambda \end{array}\!\! \right),\quad
  F(w) =\left(\!\!\begin{array}{c}
     - A^T\lambda \\
     Ax-b \end{array}\!\! \right)
    \quad \hbox{and} \quad  \Omega= {\cal X} \times \Re^m.
\end{equation}
\end{subequations}
Note that the operator $F$ defined in \eqref{Notation-uFO} is affine with a skew-symmetric matrix. It holds that
\begin{equation}\label{EQF}
  (u- v)^T(F(u) -F(v))\equiv0, \quad \forall \; u,\,v \in \Re^{(n+m)},
\end{equation}
which indicates that $F$ is monotone. Throughout our discussion, we denote by $\Omega^*$ the solution set of the VI \eqref{VI}, which is also the solution set of the studied model \eqref{problem}.

\subsection{Prediction-correction interpretation of (\ref{DPALM})}

The artificial prediction-correction interpretation for a known algorithm is a powerful technique for streamlining its convergence analysis, and the related works can be found in, e.g., \cite{GHY2014,HeYuanSIAMIS,HeYuan2021}.
To simplify the convergence analysis of the dual-primal balanced ALM \eqref{DPALM}, we also interpret it into a prediction-correction-type method as follows.
\begin{center}
 \fbox{\begin{minipage}{16.3cm}
\medskip
\textbf{(Prediction step)} With given $(x^k,\lambda^k)$, the dual-primal balanced ALM \eqref{DPALM} begins with

\vspace{-0.5cm}
\begin{subequations} \label{DPALM-P}
\begin{numcases}{}
\label{DPALMy}
 \bar{\lambda}^k = \lambda^k-(\frac{1}{\beta}AA^T+\delta I_m)^{-1}(Ax^k-b), \\[-0.1cm]
\label{DPALMx}
 \bar{x}^k = \arg\min\Big\{ \theta(x) + \frac{\beta}{2}\big\|x-[x^k+\frac{1}{\beta}A^T(2\bar{\lambda}^k-\lambda^k)]\big\|_2^2   \;\;\big|\;\;  x\in\mathcal{X}\Big\}.
\end{numcases}
\end{subequations}
\textbf{(Correction step)} Then, with $(\bar{x}^k,\bar{\lambda}^k)$ as a predictor, it further updates the new iterate $(x^{k+1},\lambda^{k+1})$ via
\begin{equation}\label{DPALM-C}
  \left(
    \begin{array}{c}
      x^{k+1} \\
      \lambda^{k+1} \\
    \end{array}
  \right) =  \left(
    \begin{array}{c}
      x^{k} \\
      \lambda^{k} \\
    \end{array}
  \right) - \alpha
 \left(
    \begin{array}{c}
      x^{k}-\bar{x}^k \\
      \lambda^{k} -\bar{\lambda}^k \\
    \end{array}
  \right),
\end{equation}
where $\alpha\in(0,2)$ is the extrapolation parameter.
\medskip
\end{minipage}
}
\end{center}

\section{Convergence analysis}\label{sec3}
In this section,  we establish the convergence analysis for the dual-primal balanced ALM \eqref{DPALM}, which is rooted in the prediction-correction interpretation \eqref{DPALM-P}-\eqref{DPALM-C}. Let us first prove two pivotal lemmas.
\setcounter{equation}{0}
\setcounter{remark}{0}
\begin{lemma}\label{kle}
Let $\bar{w}^k = (\bar{x}^k, \bar{\lambda}^k)$ be the predictor generated  by the prediction step \eqref{DPALM-P} with given $w^k =(x^k, \lambda^k)$. Then, we get
\begin{equation}\label{Pred1}
  \theta(x) -\theta(\bar{x}^{k}) + (w-\bar{w}^{k})^T F(\bar{w}^{k}) \ge (w-\bar{w}^{k})^T  H(w^k  -\bar{w}^{k}), \quad  \forall \; w\in  \Omega,
\end{equation}
where
\begin{equation}\label{H}
  H=\left(
  \begin{array}{cc}
    \beta I_n & -A^T \\
    -A & \frac{1}{\beta}AA^T+\delta I_m \\
  \end{array}
\right).
\end{equation}
\end{lemma}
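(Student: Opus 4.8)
The plan is to treat the two prediction updates in \eqref{DPALM-P} separately, derive a variational characterization for each, and then stack the two resulting relations into the single block-matrix inequality \eqref{Pred1}. The off-diagonal blocks $-A^T$ and $-A$ of $H$ should emerge precisely from the coupling between the two updates, while the diagonal blocks $\beta I_n$ and $\frac1\beta AA^T+\delta I_m$ come from the proximal regularization of the $\bar{x}$-subproblem and the resolvent defining $\bar{\lambda}^k$, respectively.

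First I would handle the primal block. I apply Lemma \ref{CP-TF} to the $\bar{x}$-subproblem \eqref{DPALMx}, taking $f=\theta$ and $g(x)=\frac{\beta}{2}\|x-[x^k+\frac1\beta A^T(2\bar{\lambda}^k-\lambda^k)]\|_2^2$, whose gradient at $\bar{x}^k$ is $\beta(\bar{x}^k-x^k)-A^T(2\bar{\lambda}^k-\lambda^k)$. This yields $\theta(x)-\theta(\bar{x}^k)+(x-\bar{x}^k)^T[\beta(\bar{x}^k-x^k)-A^T(2\bar{\lambda}^k-\lambda^k)]\ge0$ for all $x\in\mathcal{X}$. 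The crucial algebraic move is to split the extrapolated multiplier as $-A^T(2\bar{\lambda}^k-\lambda^k)=-A^T\bar{\lambda}^k+A^T(\lambda^k-\bar{\lambda}^k)$, which isolates the genuine VI term $-A^T\bar{\lambda}^k$ (the first component of $F(\bar{w}^k)$). Regrouping the remaining terms as $-[\beta(x^k-\bar{x}^k)-A^T(\lambda^k-\bar{\lambda}^k)]$ and moving it to the right-hand side produces exactly $(x-\bar{x}^k)^T$ times the first block of $H(w^k-\bar{w}^k)$.

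Next I would handle the dual block, which is an exact equality because $\lambda$ is unconstrained on $\Re^m$. Writing $M=\frac1\beta AA^T+\delta I_m$, the update \eqref{DPALMy} is equivalent to $M(\lambda^k-\bar{\lambda}^k)=Ax^k-b$. Inserting $Ax^k-b=(A\bar{x}^k-b)+A(x^k-\bar{x}^k)$ gives $A\bar{x}^k-b=M(\lambda^k-\bar{\lambda}^k)-A(x^k-\bar{x}^k)$, whose right-hand side is precisely the second block of $H(w^k-\bar{w}^k)$; contracting with $(\lambda-\bar{\lambda}^k)$ turns this into an identity whose left side is $(\lambda-\bar{\lambda}^k)^T$ times the second component of $F(\bar{w}^k)$. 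Finally, adding the primal inequality and the dual equality and recognizing that $(x-\bar{x}^k)^T(-A^T\bar{\lambda}^k)+(\lambda-\bar{\lambda}^k)^T(A\bar{x}^k-b)=(w-\bar{w}^k)^TF(\bar{w}^k)$ collapses everything into \eqref{Pred1}.

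The main obstacle is purely the sign and cross-term bookkeeping: one must verify that the $A^T$ coupling generated by the ``$2\bar{\lambda}^k-\lambda^k$'' extrapolation in the primal step and the $A$ coupling arising from expressing $A\bar{x}^k-b$ in terms of $Ax^k-b$ land consistently as the symmetric off-diagonal blocks $-A^T$ and $-A$ of $H$. I expect no analytic difficulty beyond this careful matching, since the diagonal entries follow directly from the two updates and monotonicity of $F$ is not even needed for this lemma.
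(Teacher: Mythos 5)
Your proposal is correct and follows essentially the same route as the paper: apply Lemma \ref{CP-TF} to each prediction subproblem, split $-A^T(2\bar{\lambda}^k-\lambda^k)$ into $-A^T\bar{\lambda}^k - A^T(\bar{\lambda}^k-\lambda^k)$ and $Ax^k-b$ into $(A\bar{x}^k-b)-A(\bar{x}^k-x^k)$, and stack the two resulting relations to recover $F(\bar{w}^k)$ and the blocks of $H$. The only cosmetic difference is that you keep the dual relation as an exact equality while the paper states it as a (trivially equivalent) variational inequality over $\Re^m$.
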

\begin{proof}
To begin with, for the subproblem \eqref{DPALMy}, we have
$$Ax^k-b + (\frac{1}{\beta}AA^T+\delta I_m) (\bar{\lambda}^k - \lambda^k) = 0,$$
which is also equivalent to
\begin{equation}\label{DALMD}
  \bar{\lambda}^k  \in \Re^m,   \quad (\lambda-\bar{\lambda}^k )^T \big\{  A\bar{x}^k-b - A(\bar{x}^k-x^k) + (\frac{1}{\beta}AA^T+\delta I_m) (\bar{\lambda}^k - \lambda^k)\big\} \ge 0, \quad  \forall \; \lambda\in \Re^m.
\end{equation}
For the subproblem \eqref{DPALMx}, it follows from  Lemma \ref{CP-TF} that
 $$ \bar{x}^{k}\in {\cal X},   \;\;  \theta(x) -\theta(\bar{x}^{k}) + (x-\bar{x}^{k})^T \{-A^T(2\bar{\lambda}^k-\lambda^k) + \beta(\bar{x}^{k}-x^k) \} \ge 0, \quad  \forall \; x\in {\cal X}, $$
which can be further rewritten as
\begin{equation}\label{DALMP}
\bar{x}^{k}\in {\cal X}, \quad \theta(x) -\theta(\bar{x}^{k}) + (x-\bar{x}^{k})^T \{-A^T\bar{\lambda}^k + \beta(\bar{x}^{k}-x^k)-A^T(\bar{\lambda}^k-\lambda^k) \} \ge 0, \quad  \forall \; x\in {\cal X}.
\end{equation}
Adding \eqref{DALMD} and \eqref{DALMP} together, we have
\begin{eqnarray*} % \label{DX-4}
   \lefteqn{(\bar{x}^k,\bar{\lambda}^k)\in{\cal X} \times \Re^m, \quad \theta(x) - \theta(\bar{x}^{k})
     + \left(\begin{array}{c}
    x- \bar{x}^{k} \\[0.1cm]
     \lambda - \bar{\lambda}^k
     \end{array}\right)^T
     \left\{
     \left(\begin{array}{c}
      - A^T\bar{\lambda}^k  \\[0.1cm]
       A\bar{x}^{k}-b
     \end{array}\right) \right. }\nn \\
     & &\qquad + \left.
  \left(\begin{array}{c}
      \beta(\bar{x}^{k} -x^k)  - A^T(\bar{\lambda}^k  -\lambda^k)\\[0.1cm]
       -A(\bar{x}^k-x^k)   +  (\frac{1}{\beta}AA^T+\delta I_m)(\bar{\lambda}^k  - \lambda^k)
     \end{array}\right)  \right\} \ge 0, \quad \forall \; (x,\lambda)\in
      {\cal X} \times \Re^m.
  \end{eqnarray*}
Using the notation in \eqref{Notation-uFO} and the matrix $H$ defined in \eqref{H}, the assertion of this lemma follows immediately.
\end{proof}

At the same time, the positive definiteness of the induced matrix $H$ defined in \eqref{H} can be ensured by the following proposition.
\begin{proposition}
The matrix $H$ defined in \eqref{H}  is positive definite for any $\beta>0$ and $\delta>0$.
\end{proposition}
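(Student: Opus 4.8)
The plan is to show that the block matrix
$$
H=\begin{pmatrix} \beta I_n & -A^T \\ -A & \frac{1}{\beta}AA^T+\delta I_m \end{pmatrix}
$$
is positive definite by exploiting its natural $2\times 2$ block structure and the evident Schur-complement cancellation built into its construction. First I would note that the $(1,1)$ block $\beta I_n$ is positive definite for any $\beta>0$, which is the hypothesis needed to form the Schur complement with respect to this leading block. The Schur complement is
$$
S=\Big(\tfrac{1}{\beta}AA^T+\delta I_m\Big)-(-A)(\beta I_n)^{-1}(-A^T)
 =\Big(\tfrac{1}{\beta}AA^T+\delta I_m\Big)-\tfrac{1}{\beta}AA^T=\delta I_m,
$$
and the whole point of the $\frac{1}{\beta}AA^T$ term in the $(2,2)$ block is precisely to make this cancellation occur, leaving the clean positive-definite remainder $\delta I_m$. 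Since a symmetric block matrix with positive-definite leading block is positive definite if and only if its Schur complement is positive definite, and $\delta I_m\succ 0$ for any $\delta>0$, the conclusion follows.

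To make the argument self-contained I would alternatively exhibit the congruence transformation that realizes this Schur decomposition explicitly, namely
$$
H=\begin{pmatrix} I_n & 0 \\ -\tfrac{1}{\beta}A & I_m \end{pmatrix}
\begin{pmatrix} \beta I_n & 0 \\ 0 & \delta I_m \end{pmatrix}
\begin{pmatrix} I_n & -\tfrac{1}{\beta}A^T \\ 0 & I_m \end{pmatrix}.
$$
One verifies this factorization by direct multiplication: the $(1,1)$ block returns $\beta I_n$, the off-diagonal blocks return $-A$ and $-A^T$, and the $(2,2)$ block returns $-\tfrac{1}{\beta}A A^T+\delta I_m+\tfrac{1}{\beta}AA^T=\tfrac{1}{\beta}AA^T+\delta I_m$ after the three products are combined. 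Because the outer two matrices are mutual transposes and are nonsingular (unit triangular), $H$ is congruent to the block-diagonal matrix $\mathrm{diag}(\beta I_n,\delta I_m)$, which is manifestly positive definite when $\beta>0$ and $\delta>0$; congruence preserves positive definiteness, so $H\succ 0$.

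I expect no serious obstacle here, since the result is essentially a structural identity rather than a deep estimate; the only thing that must be handled with a little care is the verification of the congruence factorization, where the cross terms must be expanded correctly to confirm that the $\frac{1}{\beta}AA^T$ contributions combine exactly as claimed. A purely elementary alternative, should one prefer to avoid block machinery, is to pick an arbitrary nonzero $w=(x,\lambda)\in\Re^{n+m}$ and compute the quadratic form
$$
w^THw=\beta\|x\|^2-2\lambda^TAx+\tfrac{1}{\beta}\|A^T\lambda\|^2+\delta\|\lambda\|^2
=\big\|\sqrt{\beta}\,x-\tfrac{1}{\sqrt{\beta}}A^T\lambda\big\|^2+\delta\|\lambda\|^2,
$$
completing the square to display $w^THw$ as a sum of a square and the strictly positive term $\delta\|\lambda\|^2$. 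This form vanishes only if $\lambda=0$ and $\sqrt{\beta}\,x=\tfrac{1}{\sqrt{\beta}}A^T\lambda=0$, i.e. only if $w=0$, which again gives $H\succ 0$ for all $\beta,\delta>0$ and makes transparent the role played by each parameter.
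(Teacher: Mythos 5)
Your proposal is correct. Your primary argument --- forming the Schur complement of the leading block $\beta I_n$, observing that the $\frac{1}{\beta}AA^T$ term is engineered to cancel and leave $S=\delta I_m$, and then exhibiting the block $LDL^T$ congruence to $\mathrm{diag}(\beta I_n,\delta I_m)$ --- is a genuinely different (and somewhat more structural) route than the paper's. The paper instead writes $H$ as a rank-structured sum,
$$
H=\left(\begin{array}{c}-\sqrt{\beta}\,I_n\\ \sqrt{1/\beta}\,A\end{array}\right)\left(\begin{array}{cc}-\sqrt{\beta}\,I_n & \sqrt{1/\beta}\,A^T\end{array}\right)+\left(\begin{array}{cc}0&0\\ 0&\delta I_m\end{array}\right),
$$
and evaluates the quadratic form directly as $w^THw=\|\sqrt{1/\beta}\,A^T\lambda-\sqrt{\beta}\,x\|^2+\delta\|\lambda\|^2>0$; this is exactly the ``purely elementary alternative'' you give in your last paragraph, so your proposal actually subsumes the paper's proof. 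The Schur-complement view buys a cleaner explanation of \emph{why} the $(2,2)$ block has the form it does and generalizes immediately to the multi-block matrix $H$ in \eqref{HN}; the paper's completing-the-square computation is shorter and avoids invoking the Schur-complement criterion. One small slip to fix: in your verification of the $(2,2)$ block of the triple product, the terms you list sum to $\delta I_m$, not to $\frac{1}{\beta}AA^T+\delta I_m$; the correct expansion is $\bigl(-\frac{1}{\beta}A\bigr)(\beta I_n)\bigl(-\frac{1}{\beta}A^T\bigr)+\delta I_m=\frac{1}{\beta}AA^T+\delta I_m$, which does give the right block, so the factorization itself stands.
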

\begin{proof}
First of all, it is trivial to verify that
$$H=\left(
  \begin{array}{cc}
    \beta I_n & -A^T \\
    -A & \frac{1}{\beta}AA^T+\delta I_m \\
  \end{array}
\right)=\left(
          \begin{array}{c}
            -\sqrt{\beta} I_n \\[0.1cm]
            \sqrt{\frac{1}{\beta}}A \\
          \end{array}
        \right)\left(
                 \begin{array}{cc}
                   -\sqrt{\beta} I_n & \sqrt{\frac{1}{\beta}}A^T \\
                 \end{array}
               \right)
+\left(
  \begin{array}{cc}
    0 & 0 \\
    0 & \delta I_m \\
  \end{array}
\right).$$
Then, for any $w=(x,\lambda)\neq0$, we have
$$w^THw=\|\sqrt{\frac{1}{\beta}}A^T\lambda-\sqrt{\beta}x\|^2+\delta\|\lambda\|^2>0,$$
and the proof is complete accordingly.
\end{proof}

The next lemma further refines the right-hand side of \eqref{Pred1}, and it is used to quantify the difference of a solution point of
the VI \eqref{VI} by recursively quadratic terms.

\begin{lemma}  Let $\{w^k\}$ and $\{\bar{w}^k\}$ be the sequences generated  by the prediction-correction scheme \eqref{DPALM-P}-\eqref{DPALM-C} with $\beta>0$ and $\delta>0$. Then, for any $\alpha\in(0,2)$, we have
\begin{eqnarray}\label{ki2}
% \nonumber to remove numbering (before each equation)
  \lefteqn{\alpha\big\{\theta(x)-\theta(\bar{x}^k)+(w-\bar{w}^k)^TF(w)\big\}}    \nonumber \\
   & \geq &  \frac{1}{2}\big\{\|w-w^{k+1}\|_H^2 - \|w-w^{k}\|_H^2+\alpha(2-\alpha) \|w^k-\bar{w}^{k}\|_H^2\big\}, \quad  \forall \; w \in  \Omega,
\end{eqnarray}
where $H$ is the matrix given by \eqref{H}.
\end{lemma}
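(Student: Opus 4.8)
The plan is to begin from the prediction inequality \eqref{Pred1} established in Lemma \ref{kle}, exploit the skew-symmetry of the affine operator $F$ to replace $F(\bar{w}^k)$ by $F(w)$, and then convert the resulting inner-product bound into the telescoping quadratic form in \eqref{ki2} by a standard completing-the-square identity applied to the correction step \eqref{DPALM-C}.

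First I would note that, because $F$ is affine with a skew-symmetric coefficient matrix, the identity \eqref{EQF} gives $(w-\bar{w}^k)^T(F(w)-F(\bar{w}^k))=0$, hence $(w-\bar{w}^k)^T F(\bar{w}^k)=(w-\bar{w}^k)^T F(w)$ for every $w$. Substituting this into \eqref{Pred1} and multiplying by $\alpha>0$ yields
\begin{equation*}
\alpha\big\{\theta(x)-\theta(\bar{x}^k)+(w-\bar{w}^k)^TF(w)\big\}\ \ge\ \alpha\,(w-\bar{w}^k)^T H(w^k-\bar{w}^k),\quad \forall\, w\in\Omega.
\end{equation*}
The left-hand side already coincides with that of \eqref{ki2}, so it remains only to reconcile the two right-hand sides.

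Second, it suffices to verify the algebraic identity
\begin{equation*}
\alpha\,(w-\bar{w}^k)^T H(w^k-\bar{w}^k)=\tfrac12\big\{\|w-w^{k+1}\|_H^2-\|w-w^{k}\|_H^2+\alpha(2-\alpha)\|w^k-\bar{w}^{k}\|_H^2\big\}.
\end{equation*}
From the correction step \eqref{DPALM-C} we have $w^{k+1}-w^k=-\alpha(w^k-\bar{w}^k)$. Writing $d:=w^k-\bar{w}^k$ and using the symmetry of $H$, I would expand $\|w-w^{k+1}\|_H^2=\|(w-w^k)+\alpha d\|_H^2$ to obtain $\|w-w^{k+1}\|_H^2-\|w-w^{k}\|_H^2=2\alpha(w-w^k)^T H d+\alpha^2\|d\|_H^2$, while on the left-hand side I would split $w-\bar{w}^k=(w-w^k)+d$ so that $\alpha(w-\bar{w}^k)^T H d=\alpha(w-w^k)^T H d+\alpha\|d\|_H^2$. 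The cross-terms $\alpha(w-w^k)^T H d$ then cancel, and the identity closes because the coefficients of $\|d\|_H^2$ agree: $\alpha^2+\alpha(2-\alpha)=2\alpha$.

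I do not expect a genuine obstacle, since the whole argument reduces to one completing-the-square identity combined with the monotonicity/skew-symmetry already encoded in \eqref{EQF}. The only point demanding care is the sign bookkeeping in \eqref{DPALM-C} together with the decomposition $w-\bar{w}^k=(w-w^k)+(w^k-\bar{w}^k)$, which must be arranged so that the cross-term produced by completing the square exactly matches the one arising from the inner product. Once these align, \eqref{ki2} follows by chaining the prediction inequality with this identity.
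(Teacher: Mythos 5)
Your proposal is correct and follows essentially the same route as the paper: both start from the prediction inequality \eqref{Pred1}, substitute the correction relation $w^{k+1}-w^k=-\alpha(w^k-\bar{w}^k)$, complete the square in the $H$-norm, and invoke the skew-symmetry identity \eqref{EQF} to trade $F(\bar{w}^k)$ for $F(w)$. The only cosmetic difference is that the paper packages the expansion as a quoted four-point identity $(a-b)^TH(c-d)=\frac{1}{2}\{\|a-d\|_H^2-\|a-c\|_H^2\}+\frac{1}{2}\{\|c-b\|_H^2-\|d-b\|_H^2\}$ and applies \eqref{EQF} at the end rather than the beginning, whereas you verify the same algebra by direct expansion.
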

\begin{proof}
First of all, it follows from \eqref{Pred1} and $w^{k+1}=w^k-\alpha(w^k-\bar{w}^k)$ (see \eqref{DPALM-C}) that
\begin{equation}\label{pre2}
   \alpha\big\{\theta(x) -\theta(\bar{x}^{k}) + (w-\bar{w}^{k})^T F(\bar{w}^{k})\big\} \ge (w-\bar{w}^{k})^T  H(w^k  - w^{k+1}), \quad  \forall \; w\in  \Omega.
\end{equation}
Applying the identity
$$(a-b)^TH(c-d)=\frac{1}{2}\big\{\|a-d\|_H^2-\|a-c\|_H^2\big\}+\frac{1}{2}\big\{\|c-b\|_H^2-\|d-b\|_H^2\big\}$$
to the right-hand side of \eqref{pre2} with $a=w,\;b=\bar{w}^k,\;c=w^k\; \hbox{and} \;d=w^{k+1}$, it further implies that
\begin{eqnarray}\label{pre3}
% \nonumber to remove numbering (before each equation)
\lefteqn{(w-\bar{w}^k)^TH(w^k-{w}^{k+1})} \nonumber\\
 &\quad =& \frac{1}{2}\big\{\|w-w^{k+1}\|_H^2-\|w-w^k\|_H^2\big\} +\frac{1}{2}\big\{\|w^k-\bar{w}^k\|_H^2-\|w^{k+1}-\bar{w}^k\|_H^2\big\}.
\end{eqnarray}
For the second term of right-hand side of \eqref{pre3}, it follows from \eqref{DPALM-C} that
\begin{eqnarray}\label{pre4}
% \nonumber to remove numbering (before each equation)
  \frac{1}{2}\big\{\|w^k-\bar{w}^k\|_H^2-\|w^{k+1}-\bar{w}^k\|_H^2\big\} &=& \frac{1}{2}\big\{\|w^k-\bar{w}^k\|_H^2-\|w^k-\alpha(w^k-\bar{w}^k)-\bar{w}^k\|_H^2\big\}  \nonumber \\
  &=& \frac{1}{2}\alpha(2-\alpha)\|w^k-\bar{w}^k\|_H^2.
\end{eqnarray}
Then, combining with \eqref{pre3} and \eqref{pre4}, the inequality \eqref{pre2} equals to
$$\begin{aligned}
&\alpha\big\{\theta(x)-\theta(\bar{x}^k)+(w-\bar{w}^k)^TF(\bar{w}^k)\big\}\\
&\qquad \geq \frac{1}{2}\big\{\|w-w^{k+1}\|_H^2 - \|w-w^{k}\|_H^2+\alpha(2-\alpha) \|w^k-\bar{w}^{k}\|_H^2\big\}, \quad  \forall \; w \in  \Omega.
\end{aligned}$$
Note that $(w-\bar{w}^k)^TF(\bar{w}^k)\equiv(w-\bar{w}^k)^TF(w)$ (see \eqref{EQF}). The assertion of this lemma follows immediately.
\end{proof}

With the help of the above lemmas, the strict contraction of the sequence $\{w^k\}$ generated by the proposed method \eqref{DPALM} can be summarized in the following theorem.
\begin{theorem}
Let $\{w^k\}$ and $\{\bar{w}^k\}$ be the sequences generated  by the prediction-correction scheme \eqref{DPALM-P}-\eqref{DPALM-C} with $\beta>0$ and $\delta>0$. Then, for arbitrary $\alpha\in(0,2)$, it holds that
\begin{equation}\label{ki3}
  \|w^{k+1}-w^\ast\|_H^2\leq \|w^{k}-w^\ast\|_H^2-\alpha(2-\alpha) \|w^k-\bar{w}^k\|_H^2, \quad  \forall \; w^\ast\in  \Omega^\ast,
\end{equation}
where $H$ is the matrix defined in \eqref{H}.
\end{theorem}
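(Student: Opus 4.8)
The plan is to obtain \eqref{ki3} directly from the refined inequality \eqref{ki2} by specializing the free variable $w\in\Omega$ to an arbitrary solution point $w^\ast\in\Omega^\ast$. The guiding idea is that $w^\ast\in\Omega^\ast$ means precisely that $w^\ast$ solves the variational inequality \eqref{VI}, while the predictor $\bar{w}^k=(\bar{x}^k,\bar{\lambda}^k)$ produced by \eqref{DPALM-P} is itself an admissible test point, since $\bar{x}^k\in\mathcal{X}$ and $\bar{\lambda}^k\in\Re^m$ give $\bar{w}^k\in\Omega=\mathcal{X}\times\Re^m$. Playing the optimality of $w^\ast$ against the predictor should collapse the left-hand side of \eqref{ki2} into a nonpositive quantity, after which only elementary rearrangement of the three $H$-weighted terms remains.

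Concretely, I would first set $w=w^\ast$ in \eqref{ki2}, so that its left-hand side becomes
$$\alpha\big\{\theta(x^\ast)-\theta(\bar{x}^k)+(w^\ast-\bar{w}^k)^TF(w^\ast)\big\}.$$
Then I would invoke the defining inequality \eqref{VI-S} of the VI, evaluated at the admissible point $w=\bar{w}^k\in\Omega$, which reads
$$\theta(\bar{x}^k)-\theta(x^\ast)+(\bar{w}^k-w^\ast)^TF(w^\ast)\ge0.$$
Since this is exactly the negative of the bracketed expression above, the bracket is $\le0$, and multiplying by $\alpha>0$ shows that the entire left-hand side of \eqref{ki2} is nonpositive.

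Combining this with \eqref{ki2}, whose left-hand side dominates its right-hand side while being itself nonpositive, forces the right-hand side to be nonpositive as well, that is,
$$0\ge\frac{1}{2}\big\{\|w^\ast-w^{k+1}\|_H^2-\|w^\ast-w^k\|_H^2+\alpha(2-\alpha)\|w^k-\bar{w}^k\|_H^2\big\}.$$
Multiplying by $2$ and transposing the terms $\|w^\ast-w^k\|_H^2$ and $\alpha(2-\alpha)\|w^k-\bar{w}^k\|_H^2$ to the other side yields \eqref{ki3} at once. Here the positive definiteness of $H$ established in the preceding proposition guarantees that $\|\cdot\|_H$ is a genuine norm and that the subtracted term $\alpha(2-\alpha)\|w^k-\bar{w}^k\|_H^2$ is nonnegative for every $\alpha\in(0,2)$, so that \eqref{ki3} is a true contraction estimate rather than a vacuous one.

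The single substantive step is the sign argument in the second paragraph: recognizing that $\bar{w}^k$ is a feasible test point in $\Omega$ for the VI characterizing $w^\ast$, so that the optimality of $w^\ast$ can be turned against the predictor to make the left-hand side of \eqref{ki2} nonpositive. I note that no appeal to the monotonicity identity \eqref{EQF} is needed at this stage, because \eqref{ki2} already carries $F(w)$, which at $w=w^\ast$ matches the $F(w^\ast)$ appearing in \eqref{VI-S}. Once this observation is in place, the remainder is routine algebra with no further obstacle.
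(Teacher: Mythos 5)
Your proof is correct and follows essentially the same route as the paper: set $w=w^\ast$ in \eqref{ki2}, use the VI characterization \eqref{VI-S} with the admissible test point $\bar{w}^k\in\Omega$ to show the bracketed term has the right sign, and rearrange. The only cosmetic difference is that the paper first transposes the quadratic terms and then observes the resulting right-hand side is nonnegative, whereas you argue the left-hand side of \eqref{ki2} is nonpositive; these are the same argument.
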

\begin{proof}
Setting $w$ in \eqref{ki2} as arbitrary $w^\ast\in\Omega^\ast$, we have
\begin{eqnarray}\label{pre5}
% \nonumber to remove numbering (before each equation)
  \lefteqn{\|w^{k}-w^\ast\|_H^2-\|w^{k+1}-w^\ast\|_H^2-\alpha(2-\alpha) \|w^k-\bar{w}^{k}\|_H^2}  \nonumber \\[0.1cm]
   & \geq & 2\alpha\big\{\theta(\bar{x}^k)-\theta(x^\ast)+(\bar{w}^k-w^\ast)^TF(w^\ast)\big\}, \quad  \forall \; w^\ast \in  \Omega^\ast.
\end{eqnarray}
Since $w^\ast\in\Omega^\ast$ and $\bar{w}^k\in\Omega$, it follows from \eqref{VI-S} that the right-hand side of \eqref{pre5} is non-negative. This leads to the assertion of the theorem immediately.
\end{proof}
Based on the essential contraction property \eqref{ki3},  the global convergence of the dual-primal balanced ALM \eqref{DPALM} can be shown in the following theorem.
\begin{theorem}\label{essetheorem}
The sequence $\{w^k\}$ generated by the dual-primal balanced ALM \eqref{DPALM} converges to some $w^\infty\in\Omega^\ast$ for any $\beta>0$, $\delta>0$ and $\alpha\in(0,2)$.
\end{theorem}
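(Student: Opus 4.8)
The plan is to read off the entire result from the contraction inequality \eqref{ki3}, which already absorbs the substantive analytic work done in Lemmas \ref{kle} and the subsequent lemma; what is left is a textbook Fej\'er-monotonicity argument in the $H$-norm. First I would fix an arbitrary $w^\ast\in\Omega^\ast$ and note that, since $\alpha\in(0,2)$ guarantees $\alpha(2-\alpha)>0$, inequality \eqref{ki3} shows the scalar sequence $\{\|w^k-w^\ast\|_H\}$ is monotonically non-increasing. Because the preceding proposition established that $H$ is positive definite, $\|\cdot\|_H$ is a genuine norm equivalent to the Euclidean one, and hence $\{w^k\}$ is bounded.

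Next I would extract the residual decay. Summing \eqref{ki3} over $k=0,1,\dots,N$ telescopes the right-hand side and yields $\alpha(2-\alpha)\sum_{k=0}^{\infty}\|w^k-\bar{w}^k\|_H^2\le\|w^0-w^\ast\|_H^2<\infty$, so that $\|w^k-\bar{w}^k\|_H\to0$, i.e. $w^k-\bar{w}^k\to0$. Boundedness of $\{w^k\}$ then furnishes a subsequence $\{w^{k_j}\}$ converging to some cluster point $w^\infty$, and because $w^k-\bar{w}^k\to0$, the companion subsequence $\{\bar{w}^{k_j}\}$ converges to the same limit $w^\infty$.

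The crux is to verify that $w^\infty$ solves the VI. I would pass to the limit along $\{k_j\}$ in the prediction inequality \eqref{Pred1}. Its right-hand side $(w-\bar{w}^{k_j})^T H(w^{k_j}-\bar{w}^{k_j})$ tends to $0$, since $w^{k_j}-\bar{w}^{k_j}\to0$ while $\{\bar{w}^{k_j}\}$ stays bounded; the term $(w-\bar{w}^{k_j})^T F(\bar{w}^{k_j})$ tends to $(w-w^\infty)^T F(w^\infty)$ because $F$ is affine and therefore continuous; and lower semicontinuity of the proper convex function $\theta$ gives $\liminf_j \theta(\bar{x}^{k_j})\ge\theta(x^\infty)$. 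Combining these limits yields $\theta(x)-\theta(x^\infty)+(w-w^\infty)^T F(w^\infty)\ge0$ for all $w\in\Omega$, which is exactly \eqref{VI-S}, so $w^\infty\in\Omega^\ast$. This limit-passing, and in particular the correct handling of the nonsmooth term $\theta$ through lower semicontinuity, is the step I expect to demand the most care.

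Finally I would upgrade subsequential convergence to convergence of the whole sequence. Since $w^\infty\in\Omega^\ast$, inequality \eqref{ki3} may be invoked with the specific choice $w^\ast=w^\infty$, making $\{\|w^k-w^\infty\|_H\}$ non-increasing. As the subsequence already satisfies $\|w^{k_j}-w^\infty\|_H\to0$, a monotone sequence possessing a subsequence tending to $0$ must itself tend to $0$; hence $w^k\to w^\infty\in\Omega^\ast$, completing the proof.
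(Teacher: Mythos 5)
Your proposal is correct and follows essentially the same route as the paper's proof: boundedness from the Fej\'er-monotonicity in the $H$-norm, summation of \eqref{ki3} to get $\|w^k-\bar{w}^k\|_H\to 0$, passage to the limit in the prediction inequality \eqref{Pred1} along a convergent subsequence to show the cluster point solves \eqref{VI-S}, and then uniqueness of the cluster point via the contraction with $w^\ast=w^\infty$. The only (harmless) cosmetic differences are that you extract the cluster point from $\{w^k\}$ rather than $\{\bar{w}^k\}$ and invoke lower semicontinuity of $\theta$ where the paper simply cites continuity (which is in any case automatic for a finite-valued convex function on $\Re^n$).
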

\begin{proof} To begin with, it follows from the inequality \eqref{ki3} that the sequence $\{w^k\}$ is bounded. Summarizing \eqref{ki3} over $k=0,1,\ldots,\infty$, it further implies that
$$\sum_{k=0}^{\infty}\alpha(2-\alpha)\|w^k -\bar{w}^k\|_H^2\leq\|w^0 -w^\ast\|_H^2.$$
Therefore, we have
\begin{equation}\label{Contrac-uut}
  \lim_{k\to \infty}\|w^k -\bar{w}^k\|_H^2=0,
\end{equation}
which means that the sequence $\{\bar{w}^k\}$ is also bounded. Let $w^{\infty}$ be a cluster point of $\{\bar{w}^k\}$ and $\{\bar{w}^{k_j}\}$ be a subsequence converging to $w^{\infty}$. Then, according to \eqref{Pred1}, we have
$$\bar{w}^{k_j}\in \Omega, \quad \theta(x)-\theta(\bar{x}^{k_j}) + (w-\bar{w}^{k_j})^TF(\bar{w}^{k_j}) \ge (w-\bar{w}^{k_j})^TH(w^{k_j}-\bar{w}^{k_j}), \quad \forall\; w\in \Omega.$$
Note that the matrix $H$ defined in \eqref{H} is non-singular. It follows from \eqref{Contrac-uut} and the continuity of $\theta$ and $F$ that
$$ w^{\infty}\in \Omega, \quad \theta(x)-\theta(x^{\infty}) + (w- w^{\infty})^T F(w^{\infty}) \ge 0, \quad \forall\; w\in \Omega.$$
This indicates that $w^{\infty}\in\Omega^\ast$, which is also a solution point of the studied model \eqref{problem}. Moreover, it follows from \eqref{Contrac-uut} that $\lim_{k\rightarrow\infty}w^{k_j}=w^\infty$. In addition, according to \eqref{ki3}, we have
$$\|w^{k+1} - w^{\infty}\|_H^2 \le \|w^k  - w^{\infty}\|_H^2,$$
which means that it is impossible that the sequence $\{w^k\}$ has more than one cluster point. Consequently, we have $\lim_{k\rightarrow\infty}w^k=w^\infty\in\Omega^\ast$ and the proof is complete.
\end{proof}

\begin{remark}
Following the similar analysis technique in, e.g., \cite{HeYuanSIAMIS,HeYuan-SIAM-N,HY2015,HeYuan2021},  it is trivial to show that the dual-primal balanced ALM \eqref{DPALM} also enjoys a worst-case $\mathcal{O}(1/N)$ convergence rate in both ergodic and point-wise sense, where $N$ is the iteration counter.
\end{remark}

\section{Extensions to more general models}\label{sec4}
\setcounter{remark}{0}
\setcounter{equation}{0}

In this section, we extend the dual-primal balanced ALM \eqref{DPALM} to solve the following more general separable convex programming problem with  linear equality or inequality constraints:
\begin{equation}\label{Mp}
   \begin{array}{ll}
    \min & \sum_{i=1}^p \theta_i(x_i)    \\[0.2cm]
    \;\;\hbox{s.t.}      & \sum_{i=1}^p A_i x_i =b\; (\hbox{or}\geq b),  \\[0.2cm]
                                &  x_i\in {\cal X}_i, \;\; i=1,\ldots, p,
          \end{array}
\end{equation}
where $\theta_i: {\Re}^{n_i}\to {\Re} \;(i=1,\ldots, p)$ are closed proper convex but not necessarily smooth functions, ${\cal X}_i\subseteq \Re^{n_i} \;(i=1,\ldots, p)$ are closed convex sets, $A_i\in\Re^{m\times n_i}\;(i=1,\ldots,p)$  and $b\in \Re^m$. The possible applications of the model \eqref{Mp} can be found in, e.g., \cite{boyd2010distributed,chandrasekaran2012latent,McLachlan,Sun2021,yuan2010continuous}.

To unify the notation, let us first define
\begin{equation}\label{domains}
\Lambda=\left\{
\begin{array}{ll}
\Re^m, & \hbox{if } \sum_{i=1}^p A_i x_i =b,\\[0.2cm]
\Re_+^m, & \hbox{if } \sum_{i=1}^p A_i x_i \geq b.
\end{array}
\right.
\end{equation}
It is clear that the basic model \eqref{problem} coincides with the case of \eqref{Mp} where $p=1$ and $\Lambda=\Re^m$.
Then, a generalized dual-primal balanced ALM for the more general convex programming problem \eqref{Mp} is proposed as follows.
\begin{center}\fbox{
 \begin{minipage}{16.1cm}
 \medskip
 \noindent{\bf Algorithm: a generalized dual-primal balanced ALM for (\ref{Mp}) }
\medskip

Let $\beta_i>0$ $(i=1,\ldots,p)$ and $\delta>0$  be any constants, and we define
\begin{equation}\label{M}
    M_p = \sum_{i=1}^p\frac{1}{\beta_i}A_iA_i^T+\delta I_m.
\end{equation}
Then, the generalized dual-primal balanced ALM for \eqref{Mp} includes the following two steps:

\textbf{(Prediction step)} With given $(x_1^k,\ldots,x_p^k,\lambda^k)$,  it first generates $(\bar{x}_1^k,\ldots,\bar{x}_p^k,\bar{\lambda}^k)$ via
\begin{subequations} \label{DPALMm}
\begin{numcases}{}
\label{DPALMmy}
 \bar{\lambda}^k =\arg\min_{\lambda\in \Lambda}\Big\{\frac{1}{2}(\lambda-\lambda^k)^TM_p(\lambda-\lambda^k)+\lambda^T(\sum_{i=1}^pA_ix_i^k-b)\Big\}, \\
\label{DPALMmx}
 \bar{x}_i^k = \arg\min_{x_i\in\mathcal{X}_i}\Big\{ \theta_i(x_i) + \frac{\beta_i}{2}\big\|x_i-[x_i^k+\frac{1}{\beta_i}A_i^T(2\bar{\lambda}^k-\lambda^k)]\big\|_2^2\Big\}, \; i=1,\ldots,p.
\end{numcases}
\textbf{(Correction step)} Then, with $(\bar{x}_1^k,\ldots,\bar{x}_p^k,\bar{\lambda}^k)$  as a predictor,  it further updates the new iterate $(x_1^{k+1},\ldots,x_p^{k+1},\lambda^{k+1})$ by
\begin{equation}\label{DPALMm-C}
   \left(
    \begin{array}{c}
      x_1^{k+1} \\[-0.1cm]
      \vdots \\[-0.1cm]
      x_p^{k+1} \\
      \lambda^{k+1} \\
    \end{array}
  \right) =  \left(
    \begin{array}{c}
      x_1^{k} \\[-0.1cm]
      \vdots \\[-0.1cm]
      x_p^{k} \\
      \lambda^{k} \\
    \end{array}
  \right) - \alpha
 \left(
    \begin{array}{c}
      x_1^k-\bar{x}_1^k \\[-0.1cm]
      \vdots \\[-0.1cm]
      x_p^k-\bar{x}_p^k  \\
      \lambda^{k} -\bar{\lambda}^k \\
    \end{array}
  \right),
\end{equation}
\end{subequations}
where $\alpha\in(0,2)$ is the extrapolation parameter.
\medskip
 \end{minipage}}
\end{center}

\begin{remark}
Note that the $\lambda$-subproblem in \eqref{DPALM} is equivalent to the minimization problem
$$\bar{\lambda}^k = \arg\min\Big\{ \frac{1}{2}(\lambda-\lambda^k)^T\big[\frac{1}{\beta}AA^T+\delta I_m\big](\lambda-\lambda^k)+\lambda^T(Ax^k-b) \;\;\big|\;\; \lambda\in\Re^m    \Big\}.$$
The elementary dual-primal balanced ALM \eqref{DPALM} is a special case of \eqref{DPALMm} with $p=1$ and $\Lambda=\Re^m$.
\end{remark}

\begin{remark}
When the inequality-constrained case of \eqref{Mp} is considered, the subproblem \eqref{DPALMmy} would reduce to a standard quadratic programming with non-negative sign constraints:
$$\min\Big\{\frac{1}{2}(\lambda-\lambda^k)^TM_p(\lambda-\lambda^k)+\lambda^T(\sum_{i=1}^pA_ix_i^k-b)  \;\mid\;  \lambda\in \Re_+^m \Big\}.$$
As discussed in \cite{HeYuan2021}, such a minimization problem can be efficiently solved by many well-known solvers such as conjugate gradient method and Lemke algorithm  (see, e.g., \cite{golub,Nocedal}).
\end{remark}

\subsection{VI reformulation of (\ref{Mp})}
 Similarly as Section \ref{sec2}, to simplify the analysis for the more general model \eqref{Mp},  we first derive its optimal condition  in the VI context.

Let $\Omega:=\mathcal{X}_1\times\cdots\times\mathcal{X}_p\times\Lambda$ and the Lagrangian function of \eqref{Mp} be defined as
\begin{equation}\label{lagm}
  L(x_1,\ldots,x_p,\lambda)  = \sum_{i=1}^p\theta_i(x_i)-\lambda^T(\sum_{i=1}^pA_ix_i-b).
\end{equation}
Again,  it is trivial to see that the optimal condition of \eqref{Mp} is equivalent to finding a saddle point $w^\ast=(x_1^\ast,x_2^\ast,\ldots,x_p^\ast,\lambda^\ast)\in\Omega$ of \eqref{lagm} such that
\begin{equation}\label{VIm}
    \left\{ \begin{array}{ll}
     \theta_1(x_1) - \theta_1(x_1^\ast) + (x_1-x_1^\ast)^T(- A_1^T\lambda^\ast) \ge 0, & \forall\; x_1\in {\cal X}_1, \\[0.2cm]
     \theta_2(x_2) - \theta_2(x_2^\ast) + (x_2-x_2^\ast)^T(- A_2^T\lambda^\ast) \ge 0, & \forall\; x_2\in {\cal X}_2, \\
       \qquad \quad \qquad \qquad  \vdots &\\
      \theta_p(x_p) - \theta_p(x_p^\ast) + (x_p-x_p^\ast)^T(- A_p^T\lambda^\ast) \ge 0,  & \forall\; x_p\in {\cal X}_p, \\[0.2cm]
     (\lambda-\lambda^\ast)^T(\sum_{i=1}^pA_ix_i^\ast-b)\geq 0,  &  \forall \; \lambda\in \Lambda,
        \end{array} \right.
\end{equation}
which is also equivalent to the following VI:
\begin{subequations}\label{VIP}
\begin{equation}\label{OVIm}
  \hbox{VI}(\Omega,F,\theta): \quad  w^*\in \Omega, \quad \theta(x) -\theta(x^*) + (w-w^*)^T F(w^*) \ge 0, \quad  \forall  \; w\in\Omega,
\end{equation}
where
\begin{equation}\label{VI-S1}
 w = \left(
                                   \begin{array}{c}
                                    x_1 \\[-0.1cm]
                                     \vdots \\[-0.1cm]
                                     x_p \\
                                     \lambda \\
                                   \end{array}
                                 \right), \quad
F(w)=\left(
                                   \begin{array}{c}
                                     -A_1^T\lambda \\[-0.1cm]
                                     \vdots \\[-0.1cm]
                                     -A_p^T\lambda \\
                                     \sum_{i=1}^pA_ix_i - b \\
                                   \end{array}
                                 \right)  \quad \hbox{and} \quad \Omega=\mathcal{X}_1\times\cdots\times\mathcal{X}_p\times\Lambda.
\end{equation}
\end{subequations}
Again, we denote by $\Omega^\ast$ the solution set of the VI \eqref{VIP}, which is also the solution set of the more general model \eqref{Mp}.

\subsection{Convergence analysis for (\ref{DPALMm})}
Following the same analysis routine in Section \ref{sec3}, we establish the convergence analysis for the generalized dual-primal balanced ALM \eqref{DPALMm} in this subsection. Reusing the same letters in Section \ref{sec3}, we only need to extend Lemma \ref{kle} to a more general case.

\begin{lemma}\label{lema}
Let $M_p$ be the matrix defined in \eqref{M}, and $\bar{w}^k = (\bar{x}_1^k, \ldots,\bar{x}_p^k, \bar{\lambda}^k)$ be the predictor generated  by the prediction step \eqref{DPALMmy}-\eqref{DPALMmx} with given $w^k =(x_1^k,\ldots,x_p^k, \lambda^k)$. Then, we get
\begin{equation}\label{Pred1m}
  \theta(x) -\theta(\bar{x}^{k}) + (w-\bar{w}^{k})^T F(\bar{w}^{k}) \ge (w-\bar{w}^{k})^T  H(w^k  -\bar{w}^{k}), \quad  \forall \; w\in  \Omega,
\end{equation}
where
\begin{equation}\label{HN}
  H=\left(
  \begin{array}{ccccc}
    \beta_1 I_{n_1} & 0  &   \cdots & 0  &  -A_1^T \\
    0 & \ddots &  \ddots & \vdots & \vdots  \\
    \vdots & \ddots &  \ddots & 0 & \vdots  \\
   0 & \cdots & 0 &  \beta_p I_{n_p} &   -A_p^T \\[0.1cm]
    -A_1 & \cdots  & \cdots & -A_p & M_p \\
  \end{array}
\right).
\end{equation}
\end{lemma}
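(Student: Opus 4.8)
The plan is to reproduce, block by block, the argument that established Lemma \ref{kle}, since the generalized scheme \eqref{DPALMmy}--\eqref{DPALMmx} differs from the prediction step \eqref{DPALM-P} only in that there are now $p$ primal blocks and the dual subproblem is minimized over the set $\Lambda$ rather than over all of $\Re^m$. The genuinely new point is therefore the dual step, so I would begin there. The subproblem \eqref{DPALMmy} minimizes the smooth convex function $\frac{1}{2}(\lambda-\lambda^k)^TM_p(\lambda-\lambda^k)+\lambda^T(\sum_{i=1}^pA_ix_i^k-b)$ over the closed convex set $\Lambda$, so Lemma \ref{CP-TF} applied with $f\equiv0$ and $g$ equal to this quadratic yields
$$\bar{\lambda}^k\in\Lambda,\quad (\lambda-\bar{\lambda}^k)^T\Big\{M_p(\bar{\lambda}^k-\lambda^k)+\sum_{i=1}^pA_ix_i^k-b\Big\}\ge0,\quad\forall\;\lambda\in\Lambda.$$
Substituting the identity $\sum_{i=1}^pA_ix_i^k=\sum_{i=1}^pA_i\bar{x}_i^k-\sum_{i=1}^pA_i(\bar{x}_i^k-x_i^k)$ then converts this into the exact multi-block analogue of \eqref{DALMD}. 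Note that when $\Lambda=\Re^m$ this inequality collapses to the equation used in Lemma \ref{kle}, so no separate treatment of the equality- and inequality-constrained models is required; Lemma \ref{CP-TF} handles both uniformly.

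Next, for each $i=1,\ldots,p$ I would apply Lemma \ref{CP-TF} to the primal subproblem \eqref{DPALMmx} with $f=\theta_i$ and $g=\frac{\beta_i}{2}\|x_i-[x_i^k+\frac{1}{\beta_i}A_i^T(2\bar{\lambda}^k-\lambda^k)]\|_2^2$, which gives
$$\theta_i(x_i)-\theta_i(\bar{x}_i^k)+(x_i-\bar{x}_i^k)^T\big\{\beta_i(\bar{x}_i^k-x_i^k)-A_i^T(2\bar{\lambda}^k-\lambda^k)\big\}\ge0,\quad\forall\;x_i\in\mathcal{X}_i.$$
Splitting $-A_i^T(2\bar{\lambda}^k-\lambda^k)=-A_i^T\bar{\lambda}^k-A_i^T(\bar{\lambda}^k-\lambda^k)$ produces the per-block counterpart of \eqref{DALMP}.

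Finally, I would add the $p$ primal inequalities and the dual inequality. The objective differences sum to $\theta(x)-\theta(\bar{x}^k)=\sum_{i=1}^p[\theta_i(x_i)-\theta_i(\bar{x}_i^k)]$; the terms $-A_i^T\bar{\lambda}^k$ (one per primal block) together with $\sum_{i=1}^pA_i\bar{x}_i^k-b$ (from the dual block) assemble precisely into $(w-\bar{w}^k)^TF(\bar{w}^k)$ with $F$ as in \eqref{VI-S1}; and the remaining correction terms, namely $\beta_i(\bar{x}_i^k-x_i^k)-A_i^T(\bar{\lambda}^k-\lambda^k)$ in the $x_i$-slots and $-\sum_{i=1}^pA_i(\bar{x}_i^k-x_i^k)+M_p(\bar{\lambda}^k-\lambda^k)$ in the $\lambda$-slot, assemble into $(w-\bar{w}^k)^TH(\bar{w}^k-w^k)$ with $H$ the arrow-shaped matrix \eqref{HN}. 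Transposing that last product to the right-hand side yields \eqref{Pred1m}, and invoking \eqref{EQF} lets one replace $F(\bar w^k)$ by $F(w)$ afterwards if needed. The only place demanding care — and the step I expect to be the main bookkeeping obstacle — is verifying that these scattered correction terms reassemble exactly into the block matrix of \eqref{HN}: the diagonal blocks $\beta_iI_{n_i}$, the symmetric off-diagonal couplings $-A_i^T$ and $-A_i$, and in particular the bottom-right entry $M_p=\sum_{i=1}^p\frac{1}{\beta_i}A_iA_i^T+\delta I_m$ inherited from the dual step. This is purely a matter of matching the row/column partition of $H$ against the collected terms and involves no inequality manipulation beyond the additivity already used.
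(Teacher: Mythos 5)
Your proposal is correct and follows essentially the same route as the paper's own proof: apply Lemma \ref{CP-TF} to the dual subproblem over $\Lambda$ and to each of the $p$ primal subproblems, rewrite via the substitutions $\sum_i A_i x_i^k=\sum_i A_i\bar{x}_i^k-\sum_i A_i(\bar{x}_i^k-x_i^k)$ and $-A_i^T(2\bar{\lambda}^k-\lambda^k)=-A_i^T\bar{\lambda}^k-A_i^T(\bar{\lambda}^k-\lambda^k)$, and sum to assemble $F(\bar{w}^k)$ and the matrix $H$ of \eqref{HN}. The one observation you add beyond the paper — that the variational-inequality form of the dual optimality condition handles $\Lambda=\Re^m$ and $\Lambda=\Re^m_+$ uniformly — is accurate and harmless.
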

\begin{proof}
First of all, for the subproblem \eqref{DPALMmy}, it follows from Lemma \ref{CP-TF} that
$$
  \bar{\lambda}^k  \in \Lambda,   \quad (\lambda-\bar{\lambda}^k )^T\Big\{  \sum_{i=1}^pA_ix_i^k-b  +M_p(\bar{\lambda}^k - \lambda^k)\Big\} \ge 0, \quad  \forall \; \lambda\in \Lambda,
$$
which can  be further rewritten as
\begin{equation}\label{DPALMv1}
  \bar{\lambda}^k  \in \Lambda,   \quad (\lambda-\bar{\lambda}^k )^T\Big\{  \sum_{i=1}^pA_i\bar{x}_i^k-b
    - \sum_{i=1}^pA_i(\bar{x}_i^k-x_i^k) +M_p(\bar{\lambda}^k - \lambda^k)\Big\} \ge 0, \quad  \forall \; \lambda\in \Lambda.
\end{equation}
Similarly, for each $x_i$-subproblem in \eqref{DPALMmx}, it follows from Lemma \ref{CP-TF} that
 $$ \bar{x}_i^{k}\in {\cal X}_i,   \;\;  \theta_i(x_i) -\theta_i(\bar{x}_i^{k}) + (x_i-\bar{x}_i^{k})^T \{-A_i^T(2\bar{\lambda}^k-\lambda^k) + \beta(\bar{x}_i^{k}-x_i^k) \} \ge 0, \quad  \forall \; x_i\in {\cal X}_i, $$
which also equals to
\begin{eqnarray}\label{DPALMv2}
% \nonumber to remove numbering (before each equation)
  \bar{x}_i^k\in {\cal X}_i,   &&  \theta_i(x_i) -\theta_i(\bar{x}_i^{k}) + (x_i-\bar{x}_i^{k})^T  \nonumber \\
    & & \{-A_i^T\bar{\lambda}^k + \beta(\bar{x}_i^{k}-x_i^k)-A_i^T(\bar{\lambda}^k-\lambda^k) \} \ge 0, \quad  \forall \; x_i\in {\cal X}_i.
\end{eqnarray}
Adding \eqref{DPALMv1} and \eqref{DPALMv2} together, and using the notation given in \eqref{VI-S1} and the matrix $H$ defined in \eqref{HN}, the assertion of this lemma follows immediately.
\end{proof}

Again, the positive definiteness of the induced matrix $H$ given by  \eqref{HN} can be guaranteed by the following proposition.

\begin{proposition}
The matrix $H$ defined in \eqref{HN} is positive definite for any $\delta>0$ and $\beta_i>0$ $(i=1,\ldots,p)$.
\end{proposition}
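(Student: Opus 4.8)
The plan is to reproduce, for the multi-block matrix \eqref{HN}, the same decomposition strategy that established positive definiteness in the $p=1$ case: split $H$ into a positive semidefinite part that is manifestly a sum of squares plus a strictly positive contribution carried by the parameter $\delta$. The organizing observation is that after inserting $M_p=\sum_{i=1}^p\frac{1}{\beta_i}A_iA_i^T+\delta I_m$ from \eqref{M}, each primal block $i$ couples to $\lambda$ only through the pair $(\beta_i I_{n_i}, -A_i^T)$ together with the summand $\frac{1}{\beta_i}A_iA_i^T$ sitting inside $M_p$; these three pieces are precisely the blocks of a single rank-type outer product, so the whole matrix should split as a sum of $p$ such outer products and one $\delta$-block.

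First I would compute the quadratic form directly. For any $w=(x_1,\ldots,x_p,\lambda)$ we have
$$w^THw=\sum_{i=1}^p\beta_i\|x_i\|^2-2\sum_{i=1}^p\lambda^TA_ix_i+\lambda^TM_p\lambda.$$
Substituting the definition of $M_p$ and regrouping the terms block by block turns the right-hand side into
$$w^THw=\sum_{i=1}^p\Big\|\sqrt{\beta_i}\,x_i-\tfrac{1}{\sqrt{\beta_i}}A_i^T\lambda\Big\|^2+\delta\|\lambda\|^2,$$
where completing the square in each block uses the identity $\beta_i\|x_i\|^2-2\lambda^TA_ix_i+\frac{1}{\beta_i}\|A_i^T\lambda\|^2=\|\sqrt{\beta_i}x_i-\frac{1}{\sqrt{\beta_i}}A_i^T\lambda\|^2$. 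Equivalently, and in closer parallel with the $p=1$ proposition, I would exhibit the factorization $H=\sum_{i=1}^pU_iU_i^T+\mathrm{diag}(0,\ldots,0,\delta I_m)$, where $U_i$ is the block-column matrix whose $i$-th primal block equals $-\sqrt{\beta_i}I_{n_i}$, whose dual block equals $\frac{1}{\sqrt{\beta_i}}A_i$, and whose remaining blocks vanish; a direct block multiplication recovers exactly \eqref{HN}, since the primal cross blocks $(i,j)$ with $i\neq j$ are killed by the disjoint supports of $U_i$ and $U_j$.

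Finally I would read off strict positivity from the sum-of-squares form. The expression is a sum of nonnegative terms, so $H\succeq0$. To upgrade this to positive definiteness, suppose $w\neq0$ and distinguish two cases: if $\lambda\neq0$, then the term $\delta\|\lambda\|^2>0$ already forces $w^THw>0$; if $\lambda=0$, then some $x_j\neq0$, and the corresponding square reduces to $\beta_j\|x_j\|^2>0$, again giving $w^THw>0$. Hence $w^THw>0$ for every $w\neq0$, and $H$ is positive definite for any $\delta>0$ and $\beta_i>0$. There is no genuine obstacle in this argument; the only point requiring a little care is this last case analysis, where it is the diagonal blocks $\beta_iI_{n_i}$ (and not the $\delta$-term alone) that rule out nonzero vectors with $\lambda=0$.
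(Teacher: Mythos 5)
Your proposal is correct and follows essentially the same route as the paper: the same decomposition $H=\sum_{i=1}^pU_iU_i^T+\mathrm{diag}(0,\ldots,0,\delta I_m)$ yielding the sum-of-squares form $w^THw=\sum_{i=1}^p\|\sqrt{\beta_i}x_i-\tfrac{1}{\sqrt{\beta_i}}A_i^T\lambda\|^2+\delta\|\lambda\|^2$. Your explicit two-case argument ($\lambda\neq0$ versus $\lambda=0$ with some $x_j\neq0$) is a slightly more careful finish than the paper's bare assertion of strict positivity, but the substance is identical.
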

\begin{proof}
First of all, it is trivial to check that
$$\begin{aligned}H&=\left(
  \begin{array}{ccccc}
    \beta_1 I_{n_1} & 0  &   \cdots & 0  &  -A_1^T \\
    0 & \ddots &  \ddots & \vdots & \vdots  \\
    \vdots & \ddots &  \ddots & 0 & \vdots  \\
   0 & \cdots & 0 &  \beta_p I_{n_p} &   -A_p^T \\[0.1cm]
    -A_1 & \cdots  & \cdots & -A_p & \sum_{i=1}^p\frac{1}{\beta_i}A_iA_i^T+\delta I_m \\
  \end{array}
\right)\\[0.2cm]
&=\sum_{i=1}^p\left(
          \begin{array}{c}
         \vdots \\
            -\sqrt{\beta_i} I_{n_i} \\[-0.1cm]
     \vdots \\
            \sqrt{\frac{1}{\beta_i}}A_i \\
          \end{array}
        \right)\left(
                 \begin{array}{cccc}
                  \cdots & -\sqrt{\beta_i} I_{n_i} & \cdots & \sqrt{\frac{1}{\beta_i}}A_i^T  \\
                 \end{array}
               \right)
+\left(
  \begin{array}{cc}
    0 & 0 \\
    0 & \delta I_m \\
  \end{array}
\right).\end{aligned}$$
Then, for arbitrary $w=(x_1,\ldots,x_p,\lambda)\neq0$, we have
$$w^THw=\sum_{i=1}^p\|\sqrt{\frac{1}{\beta_i}}A_i^T\lambda-\sqrt{\beta_i}x_i\|^2+\delta\|\lambda\|^2>0,$$
which further implies that the matrix $H$ is positive definite.
\end{proof}
Beginning with Lemma \ref{lema} and using the same letters, the remaining lemmas and theorems in Section \ref{sec3} then follow accordingly. The convergence analysis for the generalized  dual-primal balanced ALM \eqref{DPALMm} is thus established.

\section{Numerical experiments}\label{sec5}

\setcounter{equation}{0}

In this section, we report the numerical results of the dual-primal balanced ALM \eqref{DPALM} for the classic equality-constrained $l_1$ minimization problem.  The preliminary experimental results show that the proposed method has a significant acceleration compared with some well-known algorithms such as the linearized ALM \eqref{LALM} and the primal-dual algorithm proposed in \cite{CP2011}, and it has an almost same efficiency with the original balanced ALM  \eqref{PDALM}. Our algorithms were written in a Python 3.9 and implemented in a Lenovo computer with  2.20 GHz Intel Core i7-8750H CPU and 16 GB memory.

\subsection{Tested model}
Let us consider the classic equality-constrained $l_1$ minimization problem:
\begin{equation}\label{BP}
  \min\big\{\|x\|_1 \mid Ax=b, \; x\in\Re^n\big\},
\end{equation}
where $\|x\|_1=\sum_{i=1}^n|x_i|$, $A\in\Re^{m\times n}$ ($m<n$) and $b\in\Re^m$. The model \eqref{BP} is also known as the basis pursuit problem, and it plays a significant role in various areas such as compressed sensing and statistical learning. We see, e.g., \cite{BD2009,Chen}  for some survey papers.

 Applying the proposed method \eqref{DPALM} to \eqref{BP}, we have
\begin{equation}\label{DP-BP}
  \left\{
    \begin{array}{cll}
      \bar{\lambda}^k &=& \lambda^k-(\frac{1}{\beta}AA^T+\delta I_m)^{-1}(Ax^k-b), \\[0.2cm]
     \bar{x}^k &=& \arg\min\Big\{ \|x\|_1 + \frac{\beta}{2}\big\|x-[x^k+\frac{1}{\beta}A^T(2\bar{\lambda}^k-\lambda^k)]\big\|_2^2  \;\mid\;  x\in\Re^n\Big\}, \\[0.2cm]
     x^{k+1} & = & x^k + \alpha(\bar{x}^k-x^k), \\[0.1cm]
   \lambda^{k+1} & = & \lambda^k + \alpha(\bar{\lambda}^k-\lambda^k).
    \end{array}
  \right.
\end{equation}
For simplification,  we fix $\alpha=1$ in \eqref{DP-BP}.  Clearly, the $x$-subproblem in \eqref{DP-BP} has a closed-form solution, which can be represented explicitly by the shrinkage operator defined in, e.g., \cite{Chen}. At the same time, as a contrast, we also report the numerical results of the primal-dual algorithm (PDA for short) introduced  in \cite{CP2011},  the linearized ALM \eqref{LALM} and the balanced ALM \eqref{PDALM}. Their associated iterative schemes are trivial and thus skipped for succinctness.

\subsection{Experimental results}
To simulate, we follow the standard way (see, e.g., \cite{Deng2017}) to generate a $x^\ast\in\Re^n$ randomly whose $s$ entries are drawn from the normal distribution $\mathcal{N}(0,1)$ and the rest are zeros. Then, we generate a standard Gaussian matrix $A\in\R^{m\times n}$ whose entries satisfying the normal distribution, and further set $b=Ax^\ast$. In our experiments, we take $m=n/2$, $s=n/10$ and use $(x^0,\lambda^0)=(\textbf{0},\textbf{0})$ as the initial iterate. Moreover, the stopping criterion for \eqref{BP} (see \cite{Deng2017})  is defined as
$$\hbox{ReE}(k):=\frac{\|x^k-x^\ast\|}{\|x^\ast\|}<10^{-7},$$
where ``ReE" is short for the relative error. To implement the aforementioned algorithms efficiently,  we take the specific parameter settings as following:
\begin{itemize}
  \item Algorithm 1: the dual-primal balanced ALM \eqref{DPALM} with $\beta=10$, $\delta=0.001$ and $\alpha=1$;
  \item Algorithm 2: the balanced ALM \eqref{PDALM} with $\beta=10$ and $\delta=0.001$;
  \item Algorithm 3: the PDA with $r=\sqrt{\rho(A^TA)+0.001}$ and $s=\sqrt{\rho(A^TA)+0.001}$;
  \item Algorithm 4: the linearized ALM \eqref{LALM} with $\beta=0.01$ and $r=\beta\rho(A^TA)+0.001$.
\end{itemize}
They are almost optimal for the tested algorithms, selected out of a number of various values.

\begin{table}[H]
\caption{Numerical results for \eqref{BP} solved by the above algorithms. The associated convergence curves on some examples are plotted in Figure \ref{fig1}.}
 \centering
 \begin{tabular}{lccccccccccccc}
  \toprule
  \toprule
 \multirow{2}{*}{$n$} & \multirow{2}{*}{$\rho(A^TA)$} & \multicolumn{2}{c}{Algorithm 1} & \multicolumn{2}{c}{Algorithm 2} & \multicolumn{2}{c}{Algorithm 3} & \multicolumn{2}{c}{Algorithm 4} \cr \cmidrule(lr){3-4} \cmidrule(lr){5-6} \cmidrule(lr){7-8} \cmidrule(lr){9-10}
                 &     &     Iter     &  CPU       &         Iter     &  CPU    &         Iter     &  CPU    &         Iter     &  CPU       \cr
  \midrule
$100$       &    273.01        &    93      &      0.01       &    94      &    0.02    &      298     &    0.05    &      359     &    0.06      \\
$200$       &    571.37        &    98      &      0.02       &    99      &    0.02    &      302     &    0.08    &      325     &    0.08      \\
$300$       &    849.84        &    88      &      0.02       &    89      &    0.02    &      327     &    0.07    &      369     &    0.08      \\
$400$       &    1102.43      &    102    &      0.03       &    103    &    0.03    &      366     &    0.08    &      369     &    0.09      \\
$500$       &    1405.18      &    107    &      0.03       &    107    &    0.03    &      402     &    0.09    &      389     &    0.09      \\
$800$       &    2297.04      &    109    &      0.03       &    110    &    0.03    &      391     &    0.09    &      404     &    0.10      \\
$1000$     &    2875.02      &    160    &      0.05       &    161    &    0.06    &      373     &    0.10    &      371     &    0.11      \\
$2000$     &    5784.24      &    181    &      0.37       &    183    &    0.38    &      429     &    0.64    &      421     &    0.90      \\
$3000$     &    8592.65      &    123    &      0.65       &    123    &    0.65    &      455     &    1.85    &      448     &    2.69      \\
$4000$     &    11735.39    &    191    &      1.91       &    192    &    1.90    &      493     &    3.88    &      493     &    5.66      \\
$5000$     &    14463.51    &    116    &      1.80       &    120    &    1.85    &      499     &    6.09    &      518     &    9.28      \\
$8000$     &    23319.20    &    129    &      5.04       &    137    &    5.54    &      516     &    16.37  &      598     &    28.02    \\
$10000$   &    29150.32    &    183    &      10.98     &    184    &    11.05  &      533     &    25.53  &      652     &    46.74    \\
\bottomrule
 \end{tabular}
 \label{Ta1}
\end{table}

In Table \ref{Ta1}, for various values of $n$,  the spectrum of the matrix $A^TA$ (``$\rho(A^TA)$"),  the required iteration number (``Iter") and the totally computing time in seconds (``CPU") are reported.  It can be seen easily from Table \ref{Ta1} that the proposed method performs competitively with the prototype balanced ALM, and it has a significant acceleration compared with the PDA and the linearized ALM. To further visualize the numerical results,  in Figure \ref{fig1}, we plot the convergence curves versus both iteration numbers and CPU time for the cases where $n=300$ and $n=3000$, which can be  further demonstrated the numerical efficiency of the proposed method.

\begin{figure}[H]
\centering
\subfigure[$n=300$]{
\includegraphics[width=8.0cm]{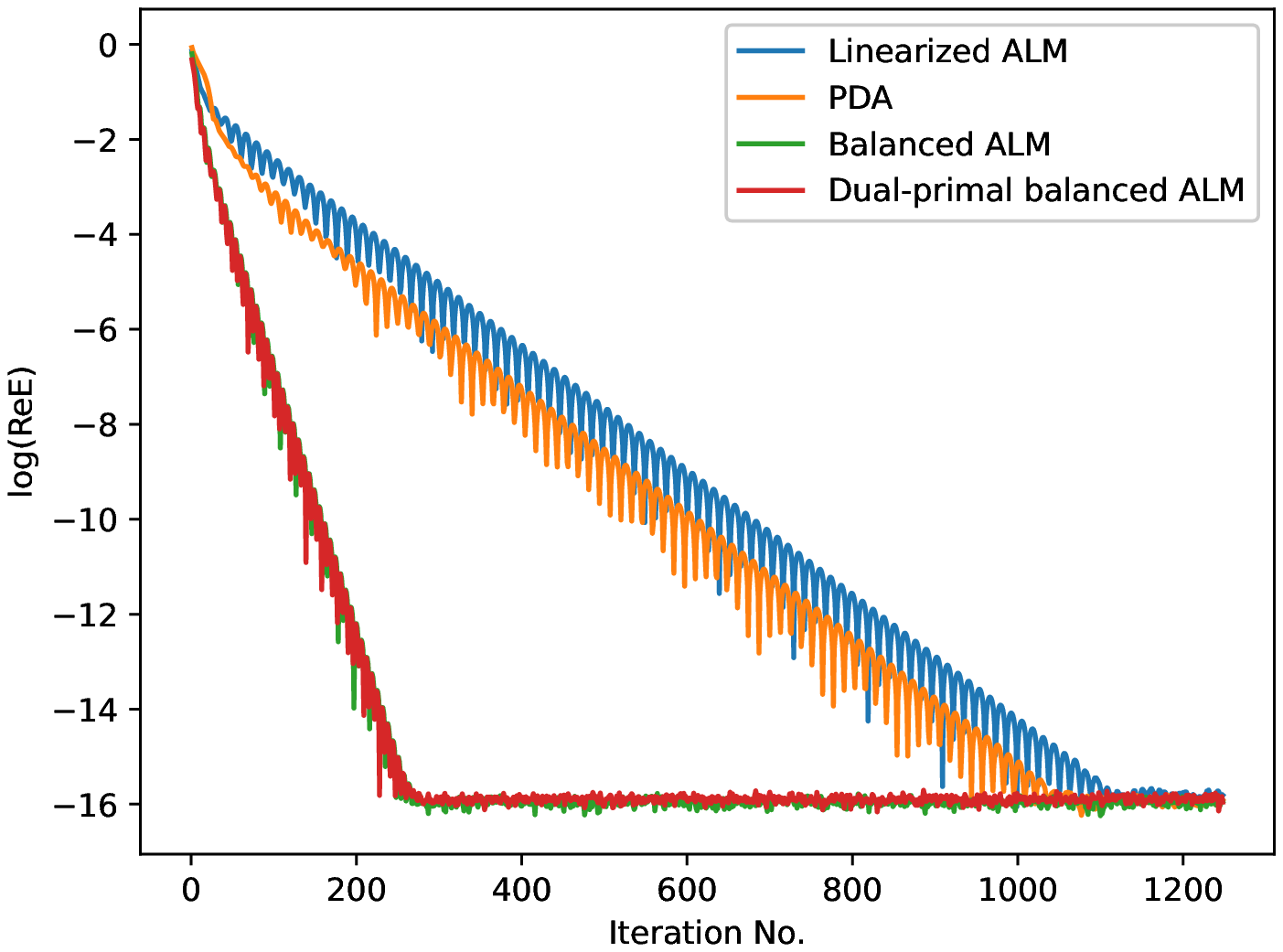}%设置插入的图大小和文件名，包括JPG,PNG,PDF,EPS等，放在源文件目录下
}\hspace{-10mm}
\subfigure[$n=300$]{
\includegraphics[width=8.0cm]{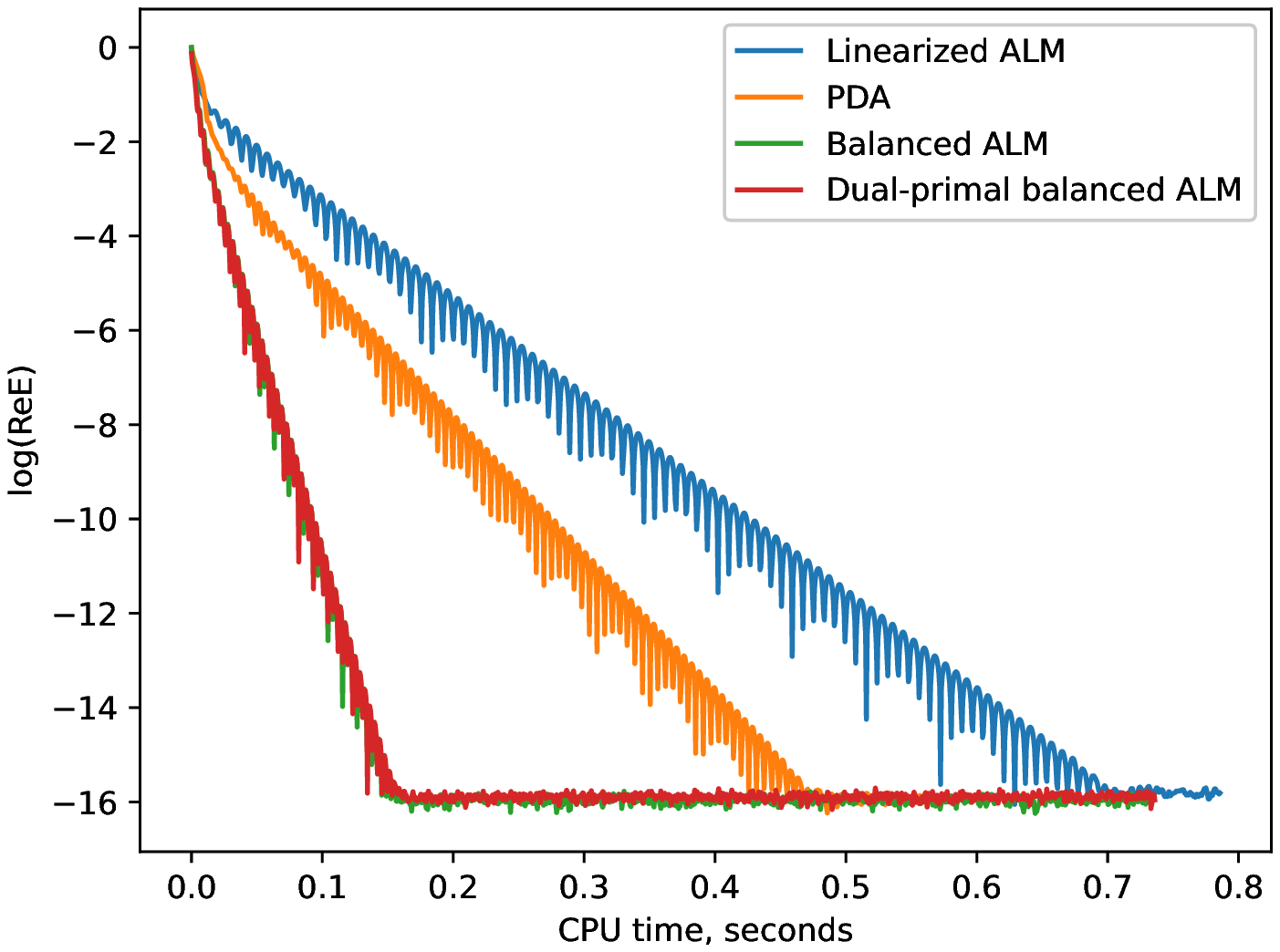}%设置插入的图大小和文件名，包括JPG,PNG,PDF,EPS等，放在源文件目录下
}\\
\subfigure[$n=3000$]{
\includegraphics[width=8.0cm]{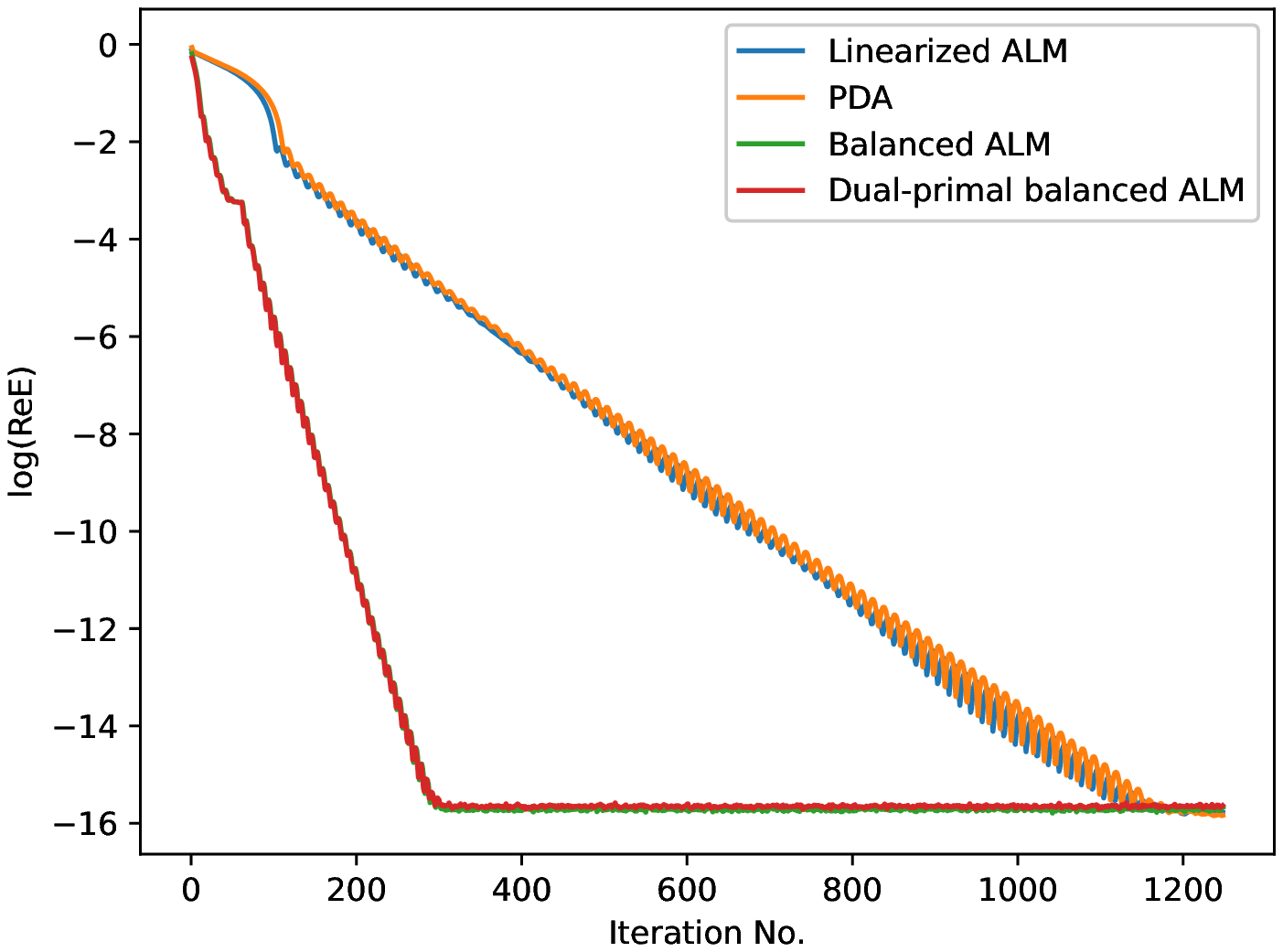}%设置插入的图大小和文件名，包括JPG,PNG,PDF,EPS等，放在源文件目录下
}\hspace{-10mm}
\subfigure[$n=3000$]{
\includegraphics[width=8.0cm]{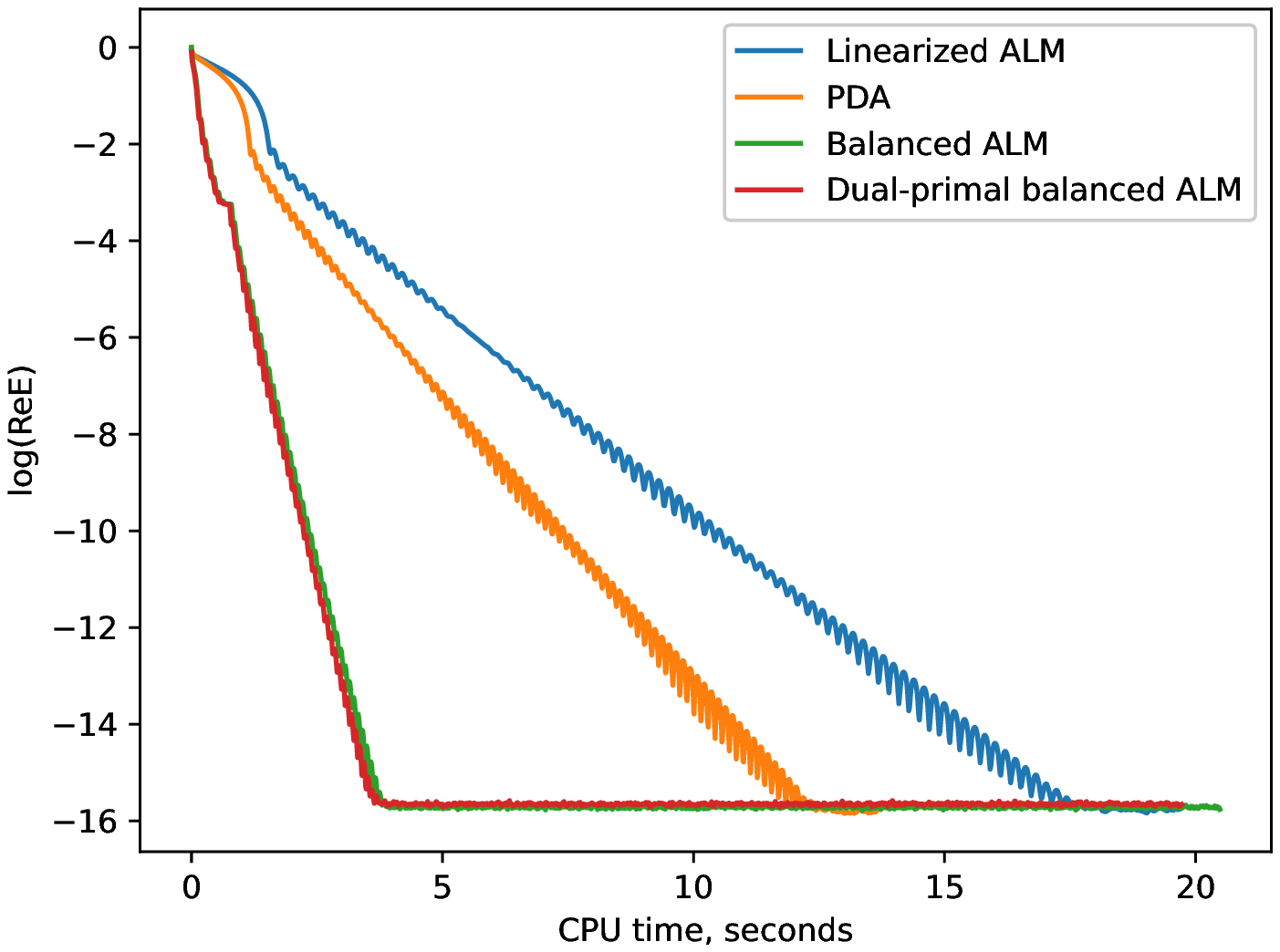}%设置插入的图大小和文件名，包括JPG,PNG,PDF,EPS等，放在源文件目录下
}
\caption{Convergence curves  for the basis pursuit problem \eqref{BP} solved by the linearized ALM, the PDA, the balanced ALM and the dual-primal balanced ALM.}
\label{fig1}
\end{figure}

\section{Conclusions}\label{sec6}
\setcounter{equation}{0}
\setcounter{remark}{0}

In this short note, we present a dual-primal  balanced ALM for the canonical convex programming problem with linear equality constraints, which uses a conversely dual-primal iterative order compared with the prototype balanced ALM. It can be also generalized to tackle more general convex programming problems with both linear equality and inequality constraints. The preliminary numerical results on basis pursuit problem demonstrate that the proposed method enjoys the almost same high efficiency with the original balanced ALM. This work may significantly enhance the rich literature for the original ALM and particularly the most recent balanced ALM.

\section*{Acknowledgements}

The author is greatly indebted to Professor Bingsheng He, Nanjing University,  for numerous enlightening discussion and his helpful comments and suggestions.

\end{CJK*}
\end{document}